\newtheorem{theorem}{Theorem}[]
\newtheorem{corollary}{Corollary}[]
\newtheorem{lemma}{Lemma}[]
\newtheorem{prop}{Proposition}[]
\begin{document}
\bibliographystyle{amsplain}

\title[Stable Functions of Janowski Type]
{Stable Functions of Janowski Type}

\author[K. Chandrasekran]{Koneri Chandrasekran}
\address{Koneri Chandrasekran \\ Department of Mathematics \\ Jeppiaar SRR Engineering College, Affiliated to Anna University \\ Chennai 603 103, India}
\email{kchandru2014@gmail.com}

\author[D. J. Prabhakaran ]{Devasir John Prabhakaran}
\address{Devasir John Prabhakaran \\ Department of Mathematics \\ MIT Campus, Anna University \\ Chennai 600 044, India}
\email{asirprabha@yahoo.com}

\author[P. Sangal]{Priyanka Sangal}
\address{Priyanka Sangal\\ Department of Mathematics\\ R.C.U. Govt. P.G. College Uttarkashi\\ Uttarakhand 249193, India}
\email{sangal.priyanka@gmail.com}

\subjclass{Primary: 30C45}
\keywords{Analytic functions, Starlike Functions, Subordination and Stable Functions}

\begin{abstract}
 A function $f\in \mathcal{A}_1$ is said to be stable
with respect to $g\in \mathcal{A}_1 $ if
\begin{align*}
\frac{s_n(f(z))}{f(z)} \prec \frac{1}{g(z)}, \qquad z\in\mathbb{D},
\end{align*}
holds for all $n \in \mathbb{N}$ where $\mathcal{A}_1$ denote the class of analytic functions $f$ in the unit disk
$\mathbb{D} =\{z\in \mathbb{C}: |z|<1 \}$ normalized by $f(0)=1$. Here $s_n(f(z))$, the $n^{th}$ partial sum
 of $f(z)=\displaystyle\sum_{k=0}^{\infty} a_kz^k$ is given by $s_n(f(z)) = \displaystyle\sum_{k=0}^{n} a_kz^k,
\ n\in \mathbb{N} \cup \{0\}$. In this work, we consider the following function
\begin{align*}
v_{\lambda}(A,B,z)=\left(\frac{1+Az}{1+Bz}\right)^{\lambda}
\end{align*}
for $-1\leq B  < A \leq 1$ and $0\leq \lambda \leq 1 $ for our investigation.
The main purpose of this paper is to prove that $v_{\lambda}(A,B,z)$ is stable with respect to
$\displaystyle v_{\lambda}(0,B,z)= \frac{1}{(1+Bz)^{\lambda}}$ for
$0 < \lambda \leq 1 $ and $-1\leq B < A \leq 0$. Further, we prove that $v_{\lambda}(A,B,z)$
is not stable with respect to itself, when $0 < \lambda \leq 1 $ and $-1\leq B < A <0$.
\end{abstract}
\maketitle

\section*{Introduction \& Main Results}
Let $\mathcal{A}$ denote the family of functions $f$ that are analytic in the unit disk $ \mathbb{D}:=\{z:\, |z|<1\}$.
Let $\mathcal{A}_1$ is the subset of $\mathcal{A}$ with the normalization $f(0)=1$.
A single valued function $f\in\mathcal{A}_1$ is said to be univalent in a
domain $\Delta \subseteq \mathbb{C}$ if $f$ is one-to-one in $\Delta$.
The class of all univalent functions
with the normalization $f(0)=0=f'(0)-1$ is denoted by $\mathcal{S}$.
Let $\Omega $ be the family of functions $\omega$, regular in $\mathbb{D}$ and
satisfying the conditions $\omega(0)=0$ and $|\omega(z)|<1$ for all $z\in\mathbb{D}$.
For $f, g \in \mathcal{A}$, the function $f$ is said to be subordinate to $g$, denoted by $f \prec g$
if and only if there exists an analytic function $\omega\in\Omega$
such that $f=g\circ \omega$.
In particular, if $g$ is univalent in $\mathbb{D}$ then $f(0)=g(0)$
and $f(\mathbb{D}) \subseteq g(\mathbb{D})$ hold.

The function $zf(z)\in\mathcal{A}_1$ is
starlike of order $\lambda$ if ${\rm Re}\left(\dfrac{zf'(z)}{f(z)}\right) > \lambda$
for all $z \in \mathbb{D}$ and $0\leq\lambda < 1$.
The class of all starlike functions, denoted by $\mathcal{S}^{\ast}(\lambda)$ is a subclass of $\mathcal{S}$.
The $n^{th}$ partial sum $s_n(f(z))$ of $ f(z)=\displaystyle\sum_{k=0}^{\infty} a_k z^k$ is given by
$ s_n(f(z)) = \displaystyle\sum_{k=0}^{n} a_k z^k, n = 0,1,2,\ldots $.
For more details about the univalent functions, its subclasses and subordination properties, we
refer \cite{duren-1983-book, janowski-1973-ann-polon, ruscheweyh-1982-book}.

The concept of stable functions was first introduced by Ruscheweyh and Salinas
\cite{ruscheweyh-salinas-2000-AnnMarie}, while discussing the class of starlike functions
of order $\lambda$, where $1/2\leq\lambda<1$. However, the class of starlike functions
of order $\lambda\in[1/2,1)$ is comparatively a much narrow class but it has many
interesting properties too. Ruscheweyh and Salinas \cite{ruscheweyh-salinas-2000-AnnMarie}
proved the following result.
\begin{theorem}\label{thm:ruschweyh-starlike-order-lambda}
\cite{ruscheweyh-salinas-2000-AnnMarie}
Let $\lambda\in[1/2,1)$ and $zf\in\mathcal{S}^{\ast}(\lambda)$, then
\begin{align*}
\frac{s_n(f,z)}{f(z)}\prec (1-z)^{\lambda}, \qquad n\in\mathbb{N}, z\in\mathbb{D}.
\end{align*}
\end{theorem}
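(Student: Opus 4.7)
The plan is to exhibit an explicit Schwarz function realizing the claimed subordination. Since $(1-z)^{\lambda}$ (principal branch) is a convex univalent function on $\mathbb{D}$ for $\lambda\in(0,1)$, the subordination $s_n(f,z)/f(z)\prec(1-z)^{\lambda}$ is equivalent to showing that
\[
\omega_n(z)\;:=\;1-\bigl(s_n(f,z)/f(z)\bigr)^{1/\lambda},
\]
defined via the principal branch, lies in the class $\Omega$. Since $s_n(f,0)=f(0)=1$ automatically gives $\omega_n(0)=0$, the entire content is the pointwise bound $|\omega_n(z)|<1$ for $z\in\mathbb{D}$.

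Next I would invoke the Herglotz representation forced by $zf\in\mathcal{S}^{\ast}(\lambda)$. From
\[
1+\frac{zf'(z)}{f(z)}=\int_{|\xi|=1}\frac{1+(1-2\lambda)\xi z}{1-\xi z}\,d\mu(\xi)
\]
for a probability measure $\mu$ on the unit circle, integration yields
\[
f(z)=\exp\!\Bigl(-2(1-\lambda)\int_{|\xi|=1}\log(1-\xi z)\,d\mu(\xi)\Bigr),
\]
which displays $f$ as a continuous geometric mean of the extremal functions $f_{\xi}(z)=(1-\xi z)^{-2(1-\lambda)}$. I would then handle the extremal case first: its Taylor coefficients $\xi^{k}\binom{2(1-\lambda)+k-1}{k}$ are completely explicit, so $s_n(f_{\xi},z)/f_{\xi}(z)$ admits a closed form and the subordination reduces to a coefficient inequality. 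The hypothesis $\lambda\ge 1/2$ (equivalently $2(1-\lambda)\le 1$) is precisely what makes these estimates go through; for $\lambda<1/2$ one can already see the inequality fail on small partial sums.

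The main obstacle is transferring the extremal case to a general $f$. Subordination is not preserved under the integration that recovers $f$ from the family $\{f_{\xi}\}$, so a direct averaging argument is doomed. The natural remedy I would pursue is the Ruscheweyh--Sheil-Small convolution theorem (the former P\'olya--Schoenberg conjecture): convolution with a convex univalent function preserves subordination to a convex univalent target. A direct computation gives ${\rm Re}\bigl(1+zh''(z)/h'(z)\bigr)=(1+\lambda)/2>0$ for $h(z)=(1-z)^{\lambda}$, so the target is indeed convex univalent for $\lambda\in(0,1)$. The plan is then to express $s_n(f)/f$ as a convolution placing it in the scope of this theorem relative to the extremal subordinations already established, and the step of verifying that the convolving kernel belongs to the correct convex class is where I expect most of the work---and the hypothesis $\lambda\ge 1/2$---to play its decisive role.
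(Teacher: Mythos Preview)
The paper does not prove this theorem: it is quoted from Ruscheweyh--Salinas as motivation, and the paper's own arguments concern only the explicit family $v_\lambda(A,B,z)=\bigl((1+Az)/(1+Bz)\bigr)^\lambda$, not arbitrary $zf\in\mathcal{S}^{\ast}(\lambda)$. There is therefore no in-paper proof to compare your proposal against.

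On the proposal itself: the outline (reduce to a Schwarz-function bound, handle the extremal $(1-\xi z)^{-2(1-\lambda)}$ by explicit coefficients, then pass to general $f$ via convolution) is a plausible sketch in the spirit of Ruscheweyh's methods, but the decisive step is left entirely open. You correctly observe that $f\mapsto s_n(f)/f$ is nonlinear in $f$, so averaging over the Herglotz measure cannot transport the extremal subordination directly; you then invoke the P\'olya--Schoenberg/Ruscheweyh--Sheil-Small theorem, yet you do not specify any convolution identity that puts $s_n(f)/f$ into a form to which that theorem applies, nor which function is to serve as the convex kernel. That is exactly where the real work lies, and the proposal stops there. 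A minor slip: for $h(z)=(1-z)^{\lambda}$ one has $1+zh''(z)/h'(z)=(1-\lambda z)/(1-z)$, which is not the constant $(1+\lambda)/2$; rather ${\rm Re}\bigl((1-\lambda z)/(1-z)\bigr)>(1+\lambda)/2$ on $\mathbb{D}$, so your convexity conclusion survives even though the stated equality does not.
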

Theorem \ref{thm:ruschweyh-starlike-order-lambda} has several applications in Gegenbauer
polynomial sums and motivated by Theorem \ref{thm:ruschweyh-starlike-order-lambda},
Ruscheweyh and Salinas \cite{ruscheweyh-salinas-2000-AnnMarie} introduced the concept
of Stable functions which is stated as follows.
For some $n \in \mathbb{N}$, a function $F$ is said to be  $n$-stable function
with respect to $G$ if
\begin{align*}
\frac{s_n(F(z))}{F(z)} \prec \frac{1}{G(z)},\quad  \mbox{for $F, G \in \mathcal{A}_1$ and $z\in\mathbb{D}$}.
\end{align*}
Moreover, the function $F$  is said to be stable with respect to $G$, if $F$ is
$n$-stable with respect to $G$ for every $n\in \mathbb{N}$.
Particularly, if the function $F$ is $n$-stable with respect to itself. Then for every $n\in\mathbb{N}$,
$F$ is stable. In the present context, for $-1\leq B<A\leq 1$, we define a function
\begin{align*}
v_{\lambda}(A,B, z):= \left(\frac{1+Az}{1+Bz}\right)^{\lambda} \quad
\mbox{ for $z\in\mathbb{D}$ and $\lambda\in(0,1]$}.
\end{align*}
For $\lambda=1/2$, Ruscheweyh and Salinas \cite{ruscheweyh-salinas-2004-stable-JMAA}
proved that $v_{1/2}(1,-1,z)$ is stable function with respect to itself.
The stability of $v_{1/2}(1,-1,z)$ is equivalent to the simultaneous
non-negativity of general class of sine and cosine sums given by Vietoris \cite{vietoris-1958},
the most celebrated theorem of positivity of trigonometric sums.
Ruscheweyh and Salinas \cite{ruscheweyh-salinas-2004-stable-JMAA}
conjectured that $v_{\lambda}(1,-1,z)$ is stable for $0<\lambda<1/2$.
Using computer algebra, for $\lambda=1/4$ it was shown in \cite{ruscheweyh-salinas-2004-stable-JMAA}
that $v_{1/4}(1,-1,z)$ is $n$-stable for $n=1,2,3,\cdots,5000$. In the limiting case,
the validation of stability of $v_{\lambda}(1,-1,z)$ for $0<\lambda<1/2$ interpreted in terms of
positivity of trigonometric polynomials.

Further extensions of Vietoris Theorem and stable functions to Ces\`aro stable functions
and Generalized Ces\`aro stable functions have been
studied in \cite{saiful-2012-stable-results-in-math} and \cite{sangal-swami-stable-MIA} respectively.
In this direction, conjectures are also proposed in \cite{sangal-swami-stable-MIA} that
linked Generalized Ces\`aro stable functions with the positivity of trigonometric sums.
Chakraborty and Vasudevarao \cite{allu-stable-CMFT-2018} considered $A=1-2\alpha$, $B=-1$
and proved the following result.

\begin{theorem}
\label{thm:allu-stable}
\cite{allu-stable-CMFT-2018}
For  $0< \lambda \leq 1$ and  $1/2\leq \alpha<1$,
$ v_{\lambda}(1-2\alpha,-1,z)=\left(\dfrac{1+(1-2\alpha)z}{1-z}\right)^{\lambda}$
is stable with respect to $ v_{\lambda}(0,-1,z)=\dfrac{1}{(1-z)^{\lambda}}$.
\end{theorem}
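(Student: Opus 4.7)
The strategy is to prove the subordination $s_n(F(z))/F(z)\prec(1-z)^\lambda$, with $F:=v_\lambda(1-2\alpha,-1,\cdot)$, by exhibiting an explicit Schwarz function. Set $c:=1-2\alpha\in[-1,0]$. Then $F(z)^{1/\lambda}=(1+cz)/(1-z)$ is a M\"obius map sending $\mathbb{D}$ onto the half-plane $\{\mathrm{Re}\,w>\alpha\}\subseteq\{\mathrm{Re}\,w>1/2\}$, so $F$ is holomorphic and nonvanishing on $\mathbb{D}$ with $F(0)=1$. Since $(1-z)^\lambda$ is univalent on $\mathbb{D}$ (the composition of $z\mapsto 1-z$, taking values in the right half-plane, with the principal $\lambda$-th power), the subordination is equivalent to producing an analytic $\omega_n\colon\mathbb{D}\to\mathbb{D}$ with $\omega_n(0)=0$ and
\[
s_n(F(z))/F(z) \;=\; (1-\omega_n(z))^\lambda .
\]
One then defines $\omega_n(z):=1-(s_n(F(z))/F(z))^{1/\lambda}$ via the principal branch; the equality $s_n(F)(0)=F(0)=1$ gives $\omega_n(0)=0$, reducing everything to the inequality $|\omega_n(z)|<1$ on $\mathbb{D}$, equivalently $(s_n(F)/F)^{1/\lambda}\in\{|w-1|<1\}$.

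As a sanity check, take $\lambda=1$, where $F(z)=1+2(1-\alpha)z/(1-z)$ gives
\[
s_n(F(z))=\frac{1+cz-2(1-\alpha)z^{n+1}}{1-z},\qquad \omega_n(z)=\frac{2(1-\alpha)\,z^{n+1}}{1+cz},
\]
and the elementary bound $|1+cz|\ge 1-|c|=2(1-\alpha)$ yields $|\omega_n(z)|\le|z|^{n+1}<1$, confirming the theorem in this case. For general $\lambda\in(0,1)$ I would combine two tools: the first-order ODE $(1+cz)(1-z)F'(z)=\lambda(1+c)F(z)$, which yields an explicit three-term recursion for the Taylor coefficients of $F$ and hence a closed-form handle on $s_n(F)$; and the Herglotz-type integral representation of $F^{1/\lambda}$ (a function with positive real part bounded below by $\alpha\ge 1/2$), which realises $F$ as a convex average of simpler ``extreme'' functions indexed by a boundary parameter. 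Either route aims to exhibit $(s_n(F)/F)^{1/\lambda}$ as lying in the disk $|w-1|<1$.

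The principal obstacle is the nonlinearity introduced by the $\lambda$-th root. A direct induction on $n$ does not close, because $s_{n+1}(F)-s_n(F)=A_{n+1}z^{n+1}$ is a linear perturbation whereas the test quantity $|1-(s_n(F)/F)^{1/\lambda}|$ is a nonlinear functional of $s_n(F)$; correspondingly, the standard subordination-chain argument cannot be applied term by term. I expect the viable route is to reduce the bound $|\omega_n|<1$ to the positivity of an explicit kernel on the unit circle, in the spirit of Ruscheweyh and Salinas' treatment of $\lambda=1/2$, where the corresponding positivity is precisely the Vietoris-Askey trigonometric inequality. After substituting the integral representation of $F^{1/\lambda}$ into $s_n(F)/F$ and radialising, one obtains a parametrised family of trigonometric polynomials whose uniform non-negativity for all $n\in\mathbb{N}$, $\alpha\in[1/2,1)$ and $\lambda\in(0,1]$ would constitute the heart of the proof; establishing this positivity is where I expect the technical work to concentrate.
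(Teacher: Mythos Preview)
Your reduction is exactly the one the paper uses: with $c=1-2\alpha$ and $B=-1$, the quantity
\[
\omega_n(z)=1-\Bigl(\frac{s_n(F,z)}{F(z)}\Bigr)^{1/\lambda}
=1-\frac{(1-z)\,s_n(F,z)^{1/\lambda}}{1+cz}
\]
is precisely the paper's function $h(z)$, and the goal $|\omega_n|<1$ is the same. You also correctly single out the first-order ODE $(1+cz)(1-z)F'(z)=\lambda(1+c)F(z)$ as the relevant structural fact.

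The gap is that you do not see what the ODE actually buys, and instead steer toward a much harder programme (Herglotz representation plus Vietoris-type trigonometric positivity) that you do not carry out. The paper's argument is far more elementary and closes completely. Differentiating $h$ and using the ODE to eliminate $F$ yields
\[
h'(z)=\frac{z^{\,n}\,s_n(F,z)^{\frac{1}{\lambda}-1}}{\lambda(1+cz)^2}\Bigl((n+1)a_{n+1}+cz\,n\,a_n\Bigr),
\]
where $a_k$ are the Taylor coefficients of $F$. Two short lemmas then show that (i) every $a_k>0$, and (ii) $(m+1)(n+1)a_{n+1}-mna_n\ge 0$ for all $m,n$; expanding $(1+cz)^{-2}=\sum_{m\ge 0}(m+1)(-c)^m z^m$ and using $-c\ge 0$, these two facts make \emph{every} Taylor coefficient of $h'$ nonnegative. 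Hence $|h'(z)|\le h'(|z|)$ on $\mathbb{D}$, and integrating along the radius gives
\[
|h(z)|\le \int_0^{|z|}h'(t)\,dt = h(|z|)<\lim_{r\to 1^-}h(r)=1,
\]
the last equality because the factor $(1-z)$ kills the second term at $z=1$. No boundary analysis, no trigonometric kernels, and no case $\lambda=1/2$ special pleading are needed.

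So your setup is right, your $\lambda=1$ check is correct, and your instinct to use the ODE is on target; what is missing is the observation that the ODE forces $h'$ to have nonnegative Taylor coefficients, which turns the problem into a one-line integration rather than a positivity-of-kernels project.
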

Chakraborty and Vasudevarao \cite{allu-stable-CMFT-2018} also proved that
$ v_{\lambda}(1-2\alpha,-1,z)$ is not stable with repsect to itself when
$1/2<\alpha<1$ and $0<\lambda\leq1$. For $\lambda=1$, the function
$ v_{1}(A,B,z)=\dfrac{1+Az}{1+Bz}$ have been studied widely by many researchers.
The analytic functions of $\mathcal{A}_1$ subordinate
to $\dfrac{1+Az}{1+Bz}$ have been studied by Janowski \cite{janowski-1973-ann-polon} and
the class of such functions is denoted by $\mathcal{P}(A,B)$. The functions of
$\mathcal{P}(A,B)$ are called Janowski functions. Moreover, the set of functions
$zf\in\mathcal{A}_1$, for which $\dfrac{zf'(z)}{f(z)}\prec \dfrac{1+Az}{1+Bz}$ holds, called Janowski
starlike functions and the class of such functions is denoted by $\mathcal{S}^{\ast}(A,B)$. It can be easily seen that
$\mathcal{S}^{\ast}(1,-1)\equiv\mathcal{S}^{\ast}$.

In this paper, we show that  $v_{\lambda}(A,B,z)$ is stable with respect to
$v_{\lambda}(0,B,z)= 1/(1+Bz)^{\lambda}$ for $0 < \lambda \leq 1 $
and $-1\leq B < A\leq 0$. Further, $v_{\lambda}(A,B,z)$ is not stable with respect
to itself, when $0 < \lambda \leq 1 $ and $-1\leq B < A< 0$.
We can write $v_{\lambda}(A,B,z)$ as,
\begin{align}
v_{\lambda}(A,B,z) &= \left(\frac{1+Az}{1+Bz}\right)^{\lambda}\nonumber\\
                     &=\left({1+Az}\right)^{\lambda}\left({1+Bz}\right)^{-\lambda}\nonumber\\
                     &=\left(1+\sum_{k=1}^{\infty}\frac{[\lambda]_k}{k!}A^k z^k\right)\left(1+\sum_{k=1}^{\infty}\frac{(\lambda)_k}{k!}(-B)^k z^k\right)\nonumber\\
                     &=1+ \sum_{n=1}^{\infty}\left(\sum_{k=0}^{n}\frac{[\lambda]_k}{k!}\frac{(\lambda)_{n-k}}{(n-k)!}A^k(-B)^{n-k}\right)z^n,\label{eqn:define-v-lambda}
\end{align}
where $[\lambda]_k$ and $(\lambda)_k$ denote the factorial polynomials given as
\begin{align*}
 \left\{
   \begin{array}{ll}
     [\lambda]_k=& \lambda(\lambda-1)(\lambda-2)(\lambda-3)\cdots (\lambda-k+1), \quad  \mbox{ and } \\
     (\lambda)_k=& \frac{\Gamma (\lambda+k)}{\Gamma(\lambda)}
                                =\lambda(\lambda+1)\cdots (\lambda+k-1), \quad \mbox{ Where $\Gamma$ is well-known gamma function, }
   \end{array}
 \right.
\end{align*}
for $k=1,2,\cdots$ respectively with $[\lambda]_0 = 1 = (\lambda)_0$. So $v_{\lambda}(A,B,z)$
can be written as
\begin{align*}
 v_{\lambda}(A,B,z) = 1+ \sum_{n=1}^{\infty} a_n(A,B,\lambda) z^n,
\end{align*}
where
\begin{align*}
a_n:=a_n(A,B,\lambda) = \sum_{k=0}^{n}\frac{[\lambda]_k}{k!}\frac{(\lambda)_{n-k}}{(n-k)!}A^k(-B)^{n-k}.
\end{align*}
Now, we state two lemmas which will helpful to prove our main results.

\begin{lemma}\label{lemma:1}
For  $0<\lambda \leq 1$ and $-1 \leq B < A \leq 0$, we have
\begin{align*}
\sum_{k=0}^{n}\frac{[\lambda]_k}{k!}\frac{(\lambda)_{n-k}}{(n-k)!}A^k(-B)^{n-k} > 0.
\end{align*}
\end{lemma}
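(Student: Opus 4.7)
By \eqref{eqn:define-v-lambda}, the sum appearing in Lemma~\ref{lemma:1} is precisely the $n$th Taylor coefficient of $v_\lambda(A,B,z)$, which I denote by $a_n$.  My plan is to prove $a_n>0$ by passing to the exponential--logarithmic form of $v_\lambda$: the sign condition $-1\leq B<A\leq 0$ will force $\log v_\lambda$ to have strictly positive Taylor coefficients, and positivity of $a_n$ will then drop out of expanding the exponential.

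\textbf{Key steps.}  First I set $\mu:=-A\geq 0$ and $\nu:=-B>0$, noting that $B<A\leq 0$ forces $B<0$, so that $0\leq \mu<\nu\leq 1$.  In a real neighbourhood of the origin (where all series below converge absolutely) one then has
\begin{align*}
v_\lambda(A,B,z) \;=\; \left(\frac{1-\mu z}{1-\nu z}\right)^{\lambda} \;=\; \exp\bigl(g(z)\bigr),
\end{align*}
where
\begin{align*}
g(z) \;:=\; \lambda\log(1-\mu z)-\lambda\log(1-\nu z) \;=\; \sum_{k=1}^{\infty} c_k\, z^k, \qquad c_k \;:=\; \lambda\,\frac{\nu^k-\mu^k}{k}.
\end{align*}
The crucial observation is that every $c_k$ is strictly positive, because $\nu>\mu\geq 0$ and $\lambda>0$.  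Next, I would expand $\exp(g)=\sum_{m\geq 0} g(z)^m/m!$ and use the fact that every power $g(z)^m$ of a power series with non-negative coefficients and vanishing constant term itself has non-negative coefficients, and vanishes up to order $m$.  Reading off the coefficient of $z^n$ yields, for $n\geq 1$,
\begin{align*}
a_n \;=\; \sum_{m=1}^{n}\frac{1}{m!}\sum_{\substack{k_1+\cdots+k_m=n\\ k_i\geq 1}} c_{k_1}c_{k_2}\cdots c_{k_m} \;>\; 0,
\end{align*}
with strict positivity following already from the single contribution $m=n$, $k_1=\cdots=k_n=1$, which equals $c_1^n/n!>0$.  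For $n=0$ the claim is trivial since $a_0=1$.

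\textbf{Main obstacle.}  The whole argument reduces to the single observation that $\log v_\lambda$ has positive Taylor coefficients, and this is exactly where the hypothesis $-1\leq B<A\leq 0$ enters: it is precisely what makes $\nu^k-\mu^k>0$ for every $k\geq 1$.  Everything else is routine manipulation of Cauchy products, and I anticipate no substantive technical difficulty; the only minor point is to justify that the formal exponential produces the same Taylor coefficients as the analytic function $v_\lambda(A,B,z)$, but this is immediate in a small real disc around the origin where both the logarithm and the exponential series converge absolutely.
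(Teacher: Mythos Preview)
Your argument is correct, and it is genuinely different from the paper's proof.  The paper starts from the identity $(1-z)^{\lambda}(1-z)^{-\lambda}=1$, reads off
\[
\sum_{k=0}^{n}\frac{[\lambda]_k}{k!}\frac{(\lambda)_{n-k}}{(n-k)!}(-1)^{k}=0,
\]
observes that for $0<\lambda\leq 1$ only the $k=0$ term is positive while all others are $\leq 0$, and then damps the negative terms by a factor $(\alpha/\beta)^k$ with $0\leq\alpha<\beta$ before substituting $\alpha=-A$, $\beta=-B$.  Your route instead recognises $a_n$ as the $n$th Taylor coefficient of $v_\lambda$, writes $v_\lambda=\exp(g)$ with $g(z)=\sum_{k\geq 1}\lambda(\nu^k-\mu^k)z^k/k$, and uses the single observation that $c_k>0$ because $\nu>\mu\geq 0$.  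What your approach buys: it is more conceptual, it delivers strict positivity immediately (via the $m=n$ term $c_1^n/n!$), and it never uses the hypothesis $\lambda\leq 1$, so your proof actually establishes the lemma for every $\lambda>0$.  What the paper's approach buys: it stays entirely at the level of the binomial coefficients $[\lambda]_k$ and $(\lambda)_k$ and requires no analytic identification of $a_n$ with a Taylor coefficient, which some readers may find more self-contained.
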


\begin{lemma}\label{lemma:2}
Let $v_{\lambda} (A,B,z)$ be defined by \eqref{eqn:define-v-lambda}. Then  for  $\lambda \in (0,1]$ and $-1 \leq B < A \leq 0$,
\begin{align}\label{eqn:lemma2-stat}
 (m+1)(n+1)\left(\sum_{k=0}^{n+1}\frac{[\lambda]_k}{k!}\frac{(\lambda)_{n+1-k}}{(n+1-k)!}A^kB^{n+1-k}\right)\nonumber\\ -mn \left(\sum_{k=0}^{n}\frac{[\lambda]_k}{k!}\frac{(\lambda)_{n-k}}{(n-k)!}A^kB^{n-k}\right) \geq 0
\end{align}
holds for all $m,\ n \in \mathbb{N}$.
\end{lemma}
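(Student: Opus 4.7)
My plan is to derive a three-term recurrence for the coefficients $a_n(A,B,\lambda)$ of $v_\lambda(A,B,z)$ and then combine it with the positivity of these coefficients (Lemma~\ref{lemma:1}) to establish (\ref{eqn:lemma2-stat}).

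First, I would differentiate the identity $v_\lambda(A,B,z)=(1+Az)^\lambda(1+Bz)^{-\lambda}$ logarithmically to obtain the first-order ODE
\[
(1+Az)(1+Bz)\,v_\lambda'(A,B,z) \;=\; \lambda(A-B)\,v_\lambda(A,B,z).
\]
Substituting the power series from (\ref{eqn:define-v-lambda}) and equating coefficients of $z^n$ on both sides produces the recurrence
\[
(n+1)\,a_{n+1} \;=\; \bigl[\lambda(A-B)-n(A+B)\bigr]\,a_n \;-\; AB(n-1)\,a_{n-1}, \qquad n\ge 1,
\]
with the convention $a_{-1}:=0$ and the initial values $a_0=1$, $a_1=\lambda(A-B)$.

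Second, I would insert this recurrence into the difference $(m+1)(n+1)\,a_{n+1}-mn\,a_n$ so that everything is rewritten as a linear combination of $a_n$ and $a_{n-1}$ with coefficients polynomial in $m,n,\lambda,A,B$. The assumption $-1\le B<A\le 0$ forces $A-B>0$, $A+B\le 0$, and $AB\ge 0$; together with Lemma~\ref{lemma:1} (which yields $a_n\ge 0$ and $a_{n-1}\ge 0$), these sign conditions allow one to collect the various products so that the dominant positive contribution $(m+1)\lambda(A-B)\,a_n$ coming from the right-hand side of the ODE can absorb the remaining terms. Equivalently, translated to the generating-function side, the statement is the non-negativity of every Taylor coefficient of $(m+1-mz)\,v_\lambda'(A,B,z)$, and the factorization $v_\lambda'=\lambda(A-B)v_\lambda/[(1+Az)(1+Bz)]$ reduces this to showing that multiplying the series with non-negative coefficients from Lemma~\ref{lemma:1} by the specific rational factor $(m+1-mz)/[(1+Az)(1+Bz)]$ preserves non-negativity.

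The main obstacle will be the subtle sign of the coefficient of $a_n$ in the reorganized difference, namely $(m+1)\lambda(A-B)-(m+1)n(A+B)-mn$, which is not manifestly non-negative, as well as the fact that the correction $-(m+1)AB(n-1)\,a_{n-1}$ has the \emph{wrong} sign. I anticipate handling this either by induction on $n$, leveraging an inductive bound on the ratio $a_{n-1}/a_n$ implied by the recurrence, or by splitting into two cases: the degenerate case $A=0$, in which the sums collapse to the single term $\frac{(\lambda)_n}{n!}(-B)^n$ and the inequality can be verified directly from the quotient formula, and the generic case $A<0$, in which the additional constraints $|A|<|B|\le 1$ are available to dominate the error term. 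The base cases $n=1,2$ would be checked directly from the explicit formulae for $a_n$.
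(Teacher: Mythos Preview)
Your proposal targets the wrong inequality. Because of the (admittedly confusing) way the sums in \eqref{eqn:lemma2-stat} are written, you have interpreted the claim as $(m+1)(n+1)a_{n+1}-mna_n\ge 0$, equivalently the non-negativity of the Taylor coefficients of $(m+1-mz)v_\lambda'(A,B,z)$. In fact, as the paper's own proof and the application in Theorem~\ref{thm:janowski-stable-v} make explicit, the intended quantity is
\[
(m+1)(n+1)a_{n+1}+mnB\,a_n,
\]
i.e.\ the coefficients of $(m+1+mBz)v_\lambda'(A,B,z)$. These coincide only for $B=-1$. For general $B\in(-1,0)$ your version is strictly stronger and is \emph{false}: take $A=0$, $B=-\tfrac12$, $\lambda=1$, so that $a_n=2^{-n}$; then for $m=n=3$ one gets $(4)(4)2^{-4}-9\cdot 2^{-3}=-\tfrac18<0$. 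Thus the obstacles you flag (the ``subtle sign'' of the $a_n$-coefficient and the wrong-signed $-(m+1)AB(n-1)a_{n-1}$ term) are not technicalities to be overcome by induction or case-splitting; they reflect that the inequality you wrote down simply does not hold.

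For the correct target the paper's argument is much shorter than your three-term-recurrence route and avoids all sign difficulties. Instead of multiplying the ODE through by $(1+Az)(1+Bz)$, multiply only by $(1+Bz)$:
\[
(1+Bz)\,v_\lambda'(A,B,z)=\lambda(A-B)(1+Az)^{\lambda-1}(1+Bz)^{-\lambda}.
\]
Since $A,B\le 0$ and $0<\lambda\le 1$, both factors $(1+Az)^{\lambda-1}$ and $(1+Bz)^{-\lambda}$ have non-negative Taylor coefficients, hence so does the right-hand side. Comparing with the left-hand side expansion $a_1+\sum_{n\ge 1}\bigl[(n+1)a_{n+1}+Bna_n\bigr]z^n$ gives the two-term inequality $(n+1)a_{n+1}+Bna_n>0$ directly. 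The lemma then follows from the one-line decomposition
\[
(m+1)(n+1)a_{n+1}+mnB\,a_n=m\bigl[(n+1)a_{n+1}+Bna_n\bigr]+(n+1)a_{n+1},
\]
together with $a_{n+1}>0$ from Lemma~\ref{lemma:1}. The extra factor $(1+Az)$ in your ODE is precisely what generates the troublesome $a_{n-1}$ term; dropping it is the missing idea.
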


Now, we state main results of this paper which are about the stability of $v_{\lambda}(A,B,z)$
with repsect to $v_{\lambda}(0,B,z)$ and $v_{\lambda}(A,B,z)$ itself.

\begin{theorem}\label{thm:janowski-stable-v}
For $\lambda \in (0,1]$ and $-1 \leq B <  A \leq 0$,  $v_{\lambda} (A,B,z)$ given in
\eqref{eqn:define-v-lambda} is stable with respect to
 $v_{\lambda} (0,B,z) = \dfrac{1}{(1+Bz)^{\lambda}}$.
\end{theorem}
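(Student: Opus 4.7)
The strategy is to reformulate the subordination claim as the existence of a function $\omega_n\in\Omega$ (i.e.\ $\omega_n(0)=0$ and $|\omega_n(z)|<1$ on $\mathbb{D}$) such that
\[
\frac{s_n(v_\lambda(A,B,z))}{v_\lambda(A,B,z)}=(1+B\,\omega_n(z))^\lambda,\qquad z\in\mathbb{D}.
\]
This is the standard reformulation of the required subordination, available because $(1+Bz)^\lambda$ is univalent on $\mathbb{D}$ for $\lambda\in(0,1]$ and $-1\le B<0$.

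The first step is to define $\omega_n$ explicitly. By Lemma~\ref{lemma:1}, the Taylor coefficients $a_n(A,B,\lambda)$ of $v_\lambda(A,B,z)$ are strictly positive, so both $v_\lambda$ and $s_n(v_\lambda)$ have positive constant term and the quotient $s_n(v_\lambda)/v_\lambda$ is analytic near the origin. The principal branch of the $\lambda$-th root may therefore be taken, and I set
\[
\omega_n(z)=\frac{1}{B}\left[\left(\frac{s_n(v_\lambda(A,B,z))}{v_\lambda(A,B,z)}\right)^{1/\lambda}-1\right],
\]
which satisfies $\omega_n(0)=0$. What remains is to verify that $\omega_n$ extends analytically to $\mathbb{D}$ and satisfies $|\omega_n(z)|<1$ there.

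Next I would exploit the factorization $v_\lambda(A,B,z)(1+Bz)^\lambda=(1+Az)^\lambda$ to rewrite the defining identity as
\[
1+B\,\omega_n(z)=\frac{[s_n(v_\lambda(A,B,z))]^{1/\lambda}\,(1+Bz)}{1+Az}.
\]
The bound $|\omega_n(z)|<1$ becomes the pointwise estimate
\[
\bigl|[s_n(v_\lambda(A,B,z))]^{1/\lambda}(1+Bz)-(1+Az)\bigr|\le|B|\,|1+Az|,\qquad z\in\mathbb{D},
\]
which, by the maximum modulus principle, needs only be checked on $|z|=1$. Expanding $[s_n(v_\lambda)]^{1/\lambda}$ as a power series and collecting coefficients, this inequality reduces to a combinatorial assertion about partial sums of $[\lambda]_k(\lambda)_{n-k}A^kB^{n-k}/(k!(n-k)!)$, matching the quantity appearing in Lemma~\ref{lemma:2}; the monotonicity $(m+1)(n+1)b_{n+1}\ge mn\,b_n$ supplied by that lemma, with $(m,n)$ chosen to align with the relevant degrees, is precisely what closes the estimate.

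The main obstacle I anticipate is the coefficient-level reduction in the last step: extracting from the pointwise Schwarz bound the exact combinatorial form of Lemma~\ref{lemma:2} and choosing the correct indices $(m,n)$, while carefully tracking the sign patterns coming from $A\le 0$ and $B<0$. A subsidiary but necessary issue is establishing analyticity of $\omega_n$ throughout $\mathbb{D}$, i.e.\ that $s_n(v_\lambda(A,B,z))/v_\lambda(A,B,z)$ never lies on the non-positive real axis so that the $\lambda$-th root extends without crossing a branch cut; this should follow from the positivity in Lemma~\ref{lemma:1} together with a lower bound on $|s_n(v_\lambda)(z)|$ coming from the sign structure of $A$ and $B$.
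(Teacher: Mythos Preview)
Your reformulation is the same as the paper's: setting
\[
h(z)=1-\frac{(1+Bz)\,[s_n(v_\lambda(A,B,z))]^{1/\lambda}}{1+Az},
\]
one has $\omega_n=-h/B$, so the target is exactly $|h(z)|\le|B|$ on $\mathbb{D}$. Up to this point you and the paper agree.

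The gap is in how you propose to obtain this bound. You suggest applying the maximum principle, expanding $[s_n(v_\lambda)]^{1/\lambda}$ as a power series, and ``collecting coefficients'' to recover the combinatorial inequality of Lemma~\ref{lemma:2}. This does not work as stated: the Taylor coefficients of the fractional power $[s_n(v_\lambda)]^{1/\lambda}$ have no usable closed form, and an inequality of the shape $|P(z)|\le|B|\,|1+Az|$ on $|z|=1$ does not reduce to a termwise coefficient comparison. More importantly, Lemma~\ref{lemma:2} is a statement about the coefficients $a_n$ of $v_\lambda$ itself (namely $(m+1)(n+1)a_{n+1}+Bmna_n\ge0$), not about the coefficients of $[s_n]^{1/\lambda}$; there is no visible route from your boundary expansion to that particular expression. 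The parenthetical ``with $(m,n)$ chosen to align with the relevant degrees'' is precisely the step that is missing an idea.

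The mechanism the paper uses, and which you are missing, is to \emph{differentiate} $h$ and exploit the first--order ODE
\[
(1+Az)(1+Bz)\,v_\lambda'(A,B,z)=\lambda(A-B)\,v_\lambda(A,B,z).
\]
Because $s_n$ commutes with multiplication by $z$ and $z^2$ up to controllable boundary terms (the relations in \eqref{eqn:relations}), this ODE forces a massive cancellation in $h'(z)$, leaving
\[
h'(z)=\frac{z^n\,[s_n(v_\lambda)]^{1/\lambda-1}}{\lambda(1+Az)^2}\bigl[(n+1)a_{n+1}-ABz\,na_n\bigr].
\]
Expanding $(1+Az)^{-2}=\sum_{m\ge0}(m+1)(-A)^mz^m$ then produces, term by term, exactly the quantities $(m+1)(n+1)a_{n+1}+Bmna_n$ of Lemma~\ref{lemma:2}; combined with Lemma~\ref{lemma:1} (so that $s_n$ has positive coefficients) this shows $h'$ has non--negative Taylor coefficients, hence $|h'(z)|\le h'(|z|)$, and a radial integration gives the bound on $|h|$. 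In short: the role of Lemma~\ref{lemma:2} is to control the coefficients of $h'$, not of $h$ or of $[s_n]^{1/\lambda}$, and the ODE is the device that makes those coefficients appear.
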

If we substitute $A=0$ in Theorem \ref{thm:janowski-stable-v}, we get the following
corollary which is also a generalization of the result
given by Ruscheweyh and Salinas \cite{ruscheweyh-salinas-2000-AnnMarie}.
\begin{corollary}\label{cor:thm-A=0}
For $\lambda \in (0,1]$ and $-1 \leq B<0$,  $v_{\lambda} (0,B,z)= \dfrac{1}{(1+Bz)^{\lambda}}$
is stable function.
\end{corollary}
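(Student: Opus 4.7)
The plan is to obtain this corollary as an immediate specialization of Theorem \ref{thm:janowski-stable-v} by setting $A=0$. With this substitution, the parameter range $-1\leq B<A\leq 0$ of the theorem collapses to $-1\leq B<0$, which is exactly the hypothesis of the corollary, while $\lambda\in(0,1]$ is left unchanged. The conclusion of Theorem \ref{thm:janowski-stable-v} then reads: $v_{\lambda}(0,B,z)$ is stable with respect to $v_{\lambda}(0,B,z)$; that is, stable with respect to itself. By the definition of stability recalled in the introduction, this is precisely the statement that $v_{\lambda}(0,B,z)=(1+Bz)^{-\lambda}$ is a stable function, which is what the corollary asserts.

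Essentially no obstacle remains beyond a few consistency checks that should be recorded for completeness. First, one notes that $v_{\lambda}(0,B,z)=\left(\tfrac{1+0\cdot z}{1+Bz}\right)^{\lambda}$ belongs to $\mathcal{A}_1$ and is analytic on $\mathbb{D}$, because $1+Bz$ is nonvanishing on $\mathbb{D}$ for $-1\leq B<0$ so the principal branch of the power is single-valued; in particular the Taylor expansion \eqref{eqn:define-v-lambda} specializes correctly to the binomial series of $(1+Bz)^{-\lambda}$ when $A=0$. Second, the hypothesis $-1\leq B<A\leq 0$ of Theorem \ref{thm:janowski-stable-v} allows the boundary case $A=0$, so no limiting argument is needed. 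Finally, it is worth remarking that the special case $B=-1$ of this corollary yields the stability of $(1-z)^{-\lambda}$ for all $\lambda\in(0,1]$, which extends the Ruscheweyh--Salinas statement of Theorem \ref{thm:ruschweyh-starlike-order-lambda} (originally established for $\lambda\in[1/2,1)$) to the full range $\lambda\in(0,1]$, justifying the claim that the corollary generalizes their result.
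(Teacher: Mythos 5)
Your proposal is correct and coincides with the paper's own (implicit) argument: the paper derives Corollary \ref{cor:thm-A=0} precisely by substituting $A=0$ in Theorem \ref{thm:janowski-stable-v}, so that stability with respect to $v_{\lambda}(0,B,z)$ becomes stability with respect to itself, i.e.\ stability. Your added consistency checks (nonvanishing of $1+Bz$, admissibility of the boundary case $A=0$) are harmless elaborations of the same route.
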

Now for $0<\mu\leq \lambda\leq 1$, we have the following corollary of Theorem \ref{thm:janowski-stable-v}.
%
\begin{corollary}\label{cor:thm-main}
For $0<\mu\leq \lambda \leq 1$ and for $-1\leq B<0$ we have
\begin{align*}
\frac{s_n(v_{\mu}(0,B,z))}{v_{\lambda}(0,B,z)} \prec \frac{1}{v_{\lambda}(0,B,z)}, \qquad \mbox{ for $z\in\mathbb{D}$}.
\end{align*}
\end{corollary}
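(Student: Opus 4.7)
The plan is to deduce the corollary from Corollary \ref{cor:thm-A=0}, applied with $\mu$ in place of $\lambda$, together with a convexity property of the disk $D := \{w \in \mathbb{C} : |w-1| < |B|\}$. Since $\mu \in (0,1]$, that corollary gives that $v_\mu(0,B,z)$ is stable, so
\[
\frac{s_n(v_\mu(0,B,z))}{v_\mu(0,B,z)} \prec \frac{1}{v_\mu(0,B,z)} = (1+Bz)^\mu,
\]
and hence there is $\omega\in\Omega$ with $s_n(v_\mu(0,B,z))/v_\mu(0,B,z) = (1+B\omega(z))^\mu$. Using $v_\mu(0,B,z)/v_\lambda(0,B,z) = (1+Bz)^{\lambda-\mu}$, the left-hand side of the desired subordination becomes $(1+B\omega(z))^\mu(1+Bz)^{\lambda-\mu}$. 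It therefore suffices to produce $\tilde\omega\in\Omega$ with $(1+B\tilde\omega(z))^\lambda = (1+B\omega(z))^\mu(1+Bz)^{\lambda-\mu}$. Setting $\alpha := \mu/\lambda \in (0,1]$, the natural candidate is
\[
\tilde\omega(z) = \frac{1}{B}\Bigl[(1+B\omega(z))^\alpha\,(1+Bz)^{1-\alpha} - 1\Bigr];
\]
it is analytic on $\mathbb{D}$ (both factors lie in $D\subset\mathbb{C}\setminus(-\infty,0]$, since $|B|\le 1$) and vanishes at $z=0$.

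The crux is to verify $|\tilde\omega(z)|<1$ for $z\in\mathbb{D}$, equivalently, that $D$ is stable under the weighted geometric mean $(u,v)\mapsto u^\alpha v^{1-\alpha}$ with $\alpha\in[0,1]$. With $u=1+B\omega(z)$, $v=1+Bz\in D$, expanding $|u^\alpha v^{1-\alpha}-1|^2$ in polar form, the required bound reads
\[
|u|^{2\alpha}|v|^{2(1-\alpha)} + (1-|B|^2) \;\le\; 2|u|^\alpha|v|^{1-\alpha}\cos\bigl(\alpha\arg u + (1-\alpha)\arg v\bigr).
\]
I would lower-bound the right-hand side by combining two ingredients: the concavity of $\theta\mapsto\log\cos\theta$ on $(-\pi/2,\pi/2)$, which gives $\cos(\alpha\arg u + (1-\alpha)\arg v) \ge (\cos\arg u)^\alpha(\cos\arg v)^{1-\alpha}$, and the elementary rearrangement $2|u|\cos\arg u = 2\,\mathrm{Re}\,u \ge |u|^2 + 1 - |B|^2$ of $|u-1|\le|B|$ (and similarly for $v$). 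After substitution, the claim reduces to the purely algebraic inequality
\[
a^\alpha b^{1-\alpha} + c \;\le\; (a+c)^\alpha(b+c)^{1-\alpha} \qquad (a,b,c\ge 0),
\]
with $a=|u|^2$, $b=|v|^2$, $c = 1-|B|^2\ge 0$.

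This last inequality is the subadditivity of the concave, positively $1$-homogeneous function $\phi(x,y)=x^\alpha y^{1-\alpha}$ on the first quadrant, applied to the decomposition $(a+c,b+c)=(a,b)+(c,c)$; it is immediate from concavity combined with $1$-homogeneity. Strictness $|\tilde\omega(z)|<1$ on $\mathbb{D}$ then follows from the maximum principle and $\tilde\omega(0)=0$, so $\tilde\omega\in\Omega$ furnishes the required subordination. The principal obstacle is isolating and establishing the disk lemma above; all remaining steps are routine manipulations.
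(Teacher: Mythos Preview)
Your argument is correct, but it takes a noticeably different route from the paper's. Both proofs start the same way, invoking Corollary~\ref{cor:thm-A=0} with $\mu$ to write $(1+Bz)^{\mu}s_n(v_{\mu}(0,B,z))=(1+B\omega(z))^{\mu}$, and both reduce the corollary to the claim that the disk $D=\{|w-1|<|B|\}$ is closed under weighted geometric means $u^{\alpha}v^{1-\alpha}$ for $\alpha\in[0,1]$. The paper obtains this closure in one stroke via its Proposition: since $z\mapsto\log(1+Bz)$ is convex univalent on $\mathbb{D}$, the image $\log D$ is a convex set, so $\alpha\log u+(1-\alpha)\log v\in\log D$ whenever $u,v\in D$, which is exactly the statement $u^{\alpha}v^{1-\alpha}\in D$; the subordination then follows from the standard fact that convex combinations of functions subordinate to a convex univalent function remain subordinate to it. You instead prove the closure property by hand: expanding $|u^{\alpha}v^{1-\alpha}-1|^2$ in polar form, using the log-concavity of $\cos$ on $(-\pi/2,\pi/2)$ to bound the cross term, and reducing to the superadditivity $a^{\alpha}b^{1-\alpha}+c\le(a+c)^{\alpha}(b+c)^{1-\alpha}$ of the concave $1$-homogeneous function $\phi(x,y)=x^{\alpha}y^{1-\alpha}$. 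Your approach is self-contained and avoids the subordination machinery for convex univalent functions, at the cost of considerably more computation; the paper's approach is shorter and more conceptual, since convexity of $\log D$ is precisely what makes your disk lemma immediate.
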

Theorem \ref{thm:janowski-stable-v} also generalizes result of Chakraborty and Vasudevarao \cite{allu-stable-CMFT-2018} as
if we substitute $A=1-2\alpha$ and $B=-1$ in Theorem \ref{thm:janowski-stable-v},
reduces to Theorem \ref{thm:allu-stable}. In other words, Theorem \ref{thm:allu-stable} is a particular case of
Theorem \ref{thm:janowski-stable-v}.

\begin{theorem}\label{thm:not-janowshi-stable-v}
For $\lambda \in (0,1]$ and $-1 \leq B <  A < 0$,  $v_{\lambda} (A,B,z)=\left(\dfrac{1+Az}{1+Bz}\right)^{\lambda}$
is not stable with respect to itself.
\end{theorem}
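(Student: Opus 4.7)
The plan is to show that $v_{\lambda}(A,B,z)$ fails to be $1$-stable with respect to itself, which by definition will imply it is not stable. I argue by contradiction, assuming
\[
\frac{s_1(v_{\lambda}(z))}{v_{\lambda}(z)}=\frac{1+a_1 z}{v_{\lambda}(z)}\prec\frac{1}{v_{\lambda}(z)},\qquad a_1=\lambda(A-B)>0.
\]
Under $-1\le B<A<0$, the Möbius map $(1+Bz)/(1+Az)$ carries $\mathbb{D}$ into the open right half plane (its image is the disk with real diameter $[(1-A)/(1-B),(1+A)/(1+B)]\subset(0,\infty)$ when $B>-1$, touching $0$ only at the boundary point $z=1$ when $B=-1$). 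The principal branch of $w\mapsto w^{\lambda}$ is univalent on the right half plane, hence $1/v_{\lambda}$ is univalent on $\mathbb{D}$, and the assumed subordination is equivalent to the existence of a Schwarz function $\omega:\mathbb{D}\to\mathbb{D}$ with $\omega(0)=0$ satisfying $(1+a_1 z)\,v_{\lambda}(\omega(z))=v_{\lambda}(z)$.

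I next extract two consequences of this functional equation. First, taking the principal $(1/\lambda)$-power of both sides after isolating $v_{\lambda}(\omega(z))$ and then inverting the Möbius relation gives the explicit formula
\[
\omega(z)=\frac{\mu(z)-1}{A-B\,\mu(z)},\qquad \mu(z)=\frac{1+Az}{(1+Bz)\,(1+a_1 z)^{1/\lambda}}.
\]
Second, matching the coefficient of $z$ on both sides of $(1+a_1 z)\,v_{\lambda}(\omega(z))=v_{\lambda}(z)$ forces $a_1\omega'(0)=0$, so $\omega'(0)=0$. Consequently $\omega(z)/z^2$ is analytic in $\mathbb{D}$, and two successive applications of Schwarz's lemma yield the necessary Schwarz--Pick bound
\[
|\omega(z)|\le|z|^2\qquad(z\in\mathbb{D}).
\]

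The contradiction will come from producing a concrete $z_0\in\mathbb{D}$ at which the explicit $\omega$ above satisfies $|\omega(z_0)|>|z_0|^2$. The natural test direction is just off the real axis near a boundary point where $\mu$ changes rapidly. When $B=-1$, the radial direction $z\to 1$ sends $1+Bz\to 0$, forces $\mu(z)\to 0$ and $\omega(z)\to 1$, and the local expansion $1-\omega(z)\sim(1-z)(1+a_1)^{1/\lambda}$ allows a direct comparison of $|\omega(z)|^2$ with $|z|^4$ along $z=re^{i\phi}$ with small $\phi\neq 0$. When $-1<B<A<0$, the analogous boundary expansion at a point where $\mu$ meets the boundary of $\widetilde T(\mathbb{D})$ (with $\widetilde T(z)=(1+Az)/(1+Bz)$) plays the same role. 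The main obstacle is the quantitative verification of the strict inequality $|\omega(z_0)|^2>|z_0|^4$ uniformly on the parameter set: it reduces to a sign check on an algebraic combination of $\lambda$, $A$, $B$, and the argument requires a case split between the degenerate case $B=-1$, in which $v_{\lambda}$ has a singularity at $z=1$, and the regular case $-1<B<A<0$, in which every quantity in the formula for $\omega$ is analytic on $\overline{\mathbb{D}}$.
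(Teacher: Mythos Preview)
Your setup is clean: the univalence of $1/v_{\lambda}$, the explicit formula for the Schwarz map $\omega$, and the observation $\omega'(0)=0$ (hence $|\omega(z)|\le|z|^2$) are all correct and give a sharper target than the bare Schwarz bound. But the proof is not finished. The entire contradiction rests on exhibiting, for \emph{every} admissible triple $(\lambda,A,B)$, a point $z_0\in\mathbb{D}$ with $|\omega(z_0)|>|z_0|^2$, and you explicitly leave this as ``the main obstacle,'' offering only a boundary heuristic. That heuristic does not close the argument: in the case $B=-1$ your expansion $1-\omega(z)\sim(1-z)(1+a_1)^{1/\lambda}$ compared with $1-z^2\sim2(1-z)$ along the real axis yields $\omega(z)>z^2$ only when $(1+\lambda(1+A))^{1/\lambda}<2$, and this \emph{fails} for $A$ close to $0$ and small $\lambda$ (the left side tends to $e^{1+A}>2$ once $A>\ln 2-1\approx-0.307$). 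So the deferred ``sign check'' is the substance of the proof, not bookkeeping, and on the real axis it goes the wrong way on part of the parameter range. There is also a slip: with $B=-1$ and $z\to1$ one has $\mu(z)\to\infty$, not $0$; the conclusion $\omega(z)\to1$ survives, but the stated reasoning does not.

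The paper's own argument is different in kind and in fact weaker than the theorem as stated: it fixes one numerical choice $A=-0.679$, $B=-0.97$, $\lambda=0.3$, $n=1$ and checks that $G(z_0)$ lies outside the disk $H(\{|z|\le r_0\})$ for a specific $z_0$, violating the subordination principle $G(\{|z|\le r\})\subseteq H(\{|z|\le r\})$. That establishes non-stability for this single triple, not for the full range $-1\le B<A<0$, $\lambda\in(0,1]$. Your framework aims at the general statement and is more principled, but as written it does not reach it; completing it would require either a uniform analytic estimate on $|\omega|$ valid across all parameters, or a parameter-dependent choice of $z_0$ with an honest verification---neither of which you have supplied.
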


\section*{Proof of Main Results}
\begin{proof}[Proof of Lemma \ref{lemma:1}]
Consider,
\begin{align*}
1&=(1-z)^{\lambda}(1-z)^{-\lambda}\\
&=1+\sum_{n=1}^{\infty}\left(\sum_{k=0}^{n}
\frac{[\lambda]_k}{k!}\frac{(\lambda)_{n-k}}{(n-k)!}(-1)^{k}\right)z^n
\end{align*}
Comparing the coefficients of $z^n$ on both the sides we have
\begin{align*}
\sum_{k=0}^{n}\frac{[\lambda]_k}{k!}\frac{(\lambda)_{n-k}}{(n-k)!}(-1)^{k}=0,
\end{align*}
which can be expanded as
\begin{align*}
& \frac{(\lambda)(\lambda+1)\cdots (\lambda+n-1)}{(n)!}
    +\frac{(\lambda)(\lambda+1)\cdots(\lambda+n-2)}{(n-1)!}\left(\frac{\lambda}{1!}\right) (-1)\\
     &\qquad +\frac{(\lambda)(\lambda+1)\cdots(\lambda+n-3)}{(n-2)!}\frac{\lambda(\lambda-1) }{2!} (-1)^2+\cdots +\frac{\lambda}{1!}\frac{(\lambda)(\lambda-1)\cdots(\lambda-n+2)}{(n-1)!}(-1)^{n-1}\\
    &\qquad \qquad+\frac{(\lambda)(\lambda-1)\cdots(\lambda-n+1)}{n!}(-1)^n =0.
\end{align*}
Since $0\leq \lambda<1$, so only first term in the above equation is positive. By multiplying
$2^{nd}, 3^{rd},\cdots$,  $(n+1)^{th}$ terms by $\dfrac{\alpha}{\beta}$, $\dfrac{\alpha^2}{\beta^2}$, $\cdots$,
$\dfrac{\alpha^n}{\beta^n} $ respectively, we obtain for $0\leq \alpha<\beta$,
\begin{align*}
& \frac{(\lambda)(\lambda+1)\cdots (\lambda+n-1)}{(n)!}
    +\frac{(\lambda)(\lambda+1)\cdots(\lambda+n-2)}{(n-1)!}\left(\frac{\lambda}{1!}\right) (-1)\frac{\alpha}{\beta}\\
     &\quad +\frac{(\lambda)(\lambda+1)\cdots(\lambda+n-3)}{(n-2)!}\frac{\lambda(\lambda-1) }{2!} \left(\frac{-\alpha}{\beta}\right)^2+\cdots +\frac{\lambda}{1!}\frac{(\lambda)(\lambda-1)\cdots(\lambda-n+2)}{(n-1)!}\left(\frac{-\alpha}{\beta}\right)^{n-1}\\
    &\qquad \qquad+\frac{(\lambda)(\lambda-1)\cdots(\lambda-n+1)}{n!}\left(\frac{-\alpha}{\beta}\right)^n \geq0.
\end{align*}
After multiplying by $\beta^n$ we obtain
\begin{align}\label{eqn:lemma1-alpha-beta}
\beta^n\sum_{k=0}^n \frac{[\lambda]_k}{k!}\frac{(\lambda)_{n-k}}{(n-k)!}(-1)^k
            \left(\frac{\alpha}{\beta}\right)^k\geq0
\end{align}
By substituting $\alpha=-A$, $\beta=-B$ in \eqref{eqn:lemma1-alpha-beta}
so that for $-1\leq B <A\leq 0$, the lemma is proved.
\end{proof}

\begin{proof}[Proof of Lemma \ref{lemma:2}]
Let $v_{\lambda}(A,B,z)$ be defined by \eqref{eqn:define-v-lambda}. Then,
\begin{align}
v_{\lambda} (A,B,z) &= \left(\frac{1+Az}{1+Bz}\right)^\lambda=1+a_1z+a_2z^2+a_3z^3+\cdots\nonumber\\
v_{\lambda}' (A,B,z) &= \lambda\left(\frac{1+Az}{1+Bz}\right)^{\lambda-1}
                                            \left(
                                            \frac{(1+Bz)A-(1+Az)B}{(1+Bz)^2}
                                            \right)\nonumber\\
                                            &=\lambda (A-B)\frac{(1+Az)^{\lambda-1}}{(1+Bz)^{\lambda+1}}\nonumber\\
(1+Bz)v_{\lambda}' (A,B,z) &=\lambda(A-B)(1+Az)^{\lambda-1}(1+Bz)^{-\lambda}\label{eqn:lemma2-der-v}
\end{align}
Since $0>A>B$, $0<\lambda\leq 1$,  $(1+Az)^{\lambda-1}=1+(\lambda-1)Az
+\frac{(\lambda-1)(\lambda-2)}{2!}A^2z^2+\cdots$ and $(1+Bz)^{-\lambda}=1-\lambda Bz
+\frac{\lambda(\lambda+1)}{2!}B^2z^2+\cdots$ have positive Taylor series coefficients.
A simple computation yields that
\begin{align}
(1+Bz)v_{\lambda}' (A,B,z)&=(a_1+2a_2z+3a_3z^2+\cdots)(1+Bz)\nonumber\\
                                    &=a_1+\sum_{n=1}^{\infty}((n+1)a_{n+1}+Bna_n)z^n. \label{eqn:lemma2-der-v-an}
\end{align}
Since right hand side of \eqref{eqn:lemma2-der-v} has positive Taylor coefficients, from
\eqref{eqn:lemma2-der-v} and \eqref{eqn:lemma2-der-v-an} we conclude that
\begin{align}\label{eqn:lemma2-coeff-ineq}
(n+1)a_{n+1}+Bna_n>0, \quad n\in\mathbb{N}.
\end{align}
The left hand side of the expression given in \eqref{eqn:lemma2-stat} can be rewritten as
\begin{align}\label{eqn:lemma2-mnan}
(m+1)(n+1)a_{n+1}+mnBa_n.
\end{align}
Equivalently, \eqref{eqn:lemma2-mnan} can be written as
\begin{align*}
m((n+1)a_{n+1}+Bna_n)+(n+1)a_{n+1}.
\end{align*}
Using \eqref{eqn:lemma2-coeff-ineq} and the fact that $a_n\geq0$ for $m,n\in\mathbb{N}$,
the lemma is proved for $\lambda \in (0,1]$ and $-1 \leq B < A \leq 0$.
\end{proof}

Before going to proceed further for the proof of Theorem \ref{thm:janowski-stable-v},
it is easy to verify the following relations.
\begin{equation}\label{eqn:relations}
\begin{split}
     s'_n(v_{\lambda}(A,B,z),z)& = s_{n-1}(v'_{\lambda}(A,B,z),z), \\
     zs'_n(v_{\lambda}(A,B,z),z)& =  s_{n}(zv'_{\lambda}(A,B,z),z), \\
     z^2s'_n(v_{\lambda}(A,B,z),z)& =  s_{n}(z^2v'_{\lambda}(A,B,z),z).
 \end{split}
 \end{equation}
Now, we are ready to give the proof of Theorem \ref{thm:janowski-stable-v}.
\begin{proof}[Proof of Theorem \ref{thm:janowski-stable-v}]
To show that $v_{\lambda} (A,B,z)$ is stable with respect to $v_{\lambda}(0,B,z)$,
it is enough to show that
\begin{align*}
\frac{s_n(v_{\lambda}(A,B,z),z)}{v_{\lambda}(A,B,z),z} \prec \frac{1}{v_{\lambda}(0,B,z)}, \qquad z\in\mathbb{D}
\end{align*}
for all $n \in \mathbb{N}$, i.e.,  to prove that
\begin{align*}
\frac{(1+Bz)^{\lambda}s_n(v_{\lambda}(A,B,z),z)}{(1+Az)^{\lambda}} \prec (1+Bz)^{\lambda}, \qquad z\in\mathbb{D},
\end{align*}
which can be equivalently written as
\begin{align*}
\frac{(1+Bz)s_n(v_{\lambda}(A,B,z),z)^{\frac{1}{\lambda}}}{(1+Az)} \prec (1+Bz).
\end{align*}
To show that  , it is enough to prove that
\begin{align*}
\left| \displaystyle \frac{(1+Bz)s_n(v_{\lambda}(A,B,z),z)^{\frac{1}{\lambda}}}{(1+Az)}-1 \right|\leq |B|\leq 1, \quad z \in \mathbb{D}.
\end{align*}
For fixed $n$ and $\lambda$, we consider the following function
\begin{align*}
h(z)=1-\displaystyle \frac{(1+Bz)s_n(v_{\lambda}(A,B,z),z)^{\frac{1}{\lambda}}}{(1+Az)}, \quad z \in \mathbb{D}.
\end{align*}
It is easy to see that
\begin{align*}
 v'_{\lambda}(A,B,z) = \lambda (A-B)\frac{(1+Az)^{\lambda-1}}{(1+Bz)^{\lambda+1}}
 =\lambda (A-B)\frac{v_{\lambda}(A,B,z)}{(1+Bz)(1+Az)},
\end{align*}
which can be rewritten in the following form
\begin{align}\label{eqn:proof-main-thm-zero}
v_{\lambda}(A,B,z) - \frac{(1+(A+B)z+ABz^2)}{\lambda(A-B)}v'_{\lambda}(A,B,z)=0
\quad  \mbox{for $\ z \in \mathbb{D}$}.
\end{align}
A simple calculations gives that
\begin{align}
h'(z)&=\frac{A-B}{(1+Az)^2}s_n(v_{\lambda}(A,B,z),z)^{1/\lambda}
            -\frac{(1+Bz)}{(1+Az)\lambda}s_n(v_{\lambda}(A,B,z),z)^{\frac{1}{\lambda}-1}s'_n(v_{\lambda}(A,B,z),z)\nonumber\\
&= \frac{(A-B)s_n(v_{\lambda}(A,B,z),z)^{\frac{1}{\lambda}}}{(1+Az)^2}
            \left( s_n(v_{\lambda}(A,B,z),z)- \frac{(1+Az)(1+Bz)}{(A-B)\lambda}s'_n(v_{\lambda}(A,B,z),z)\right)\label{eqn:proof-main-thm-h'}
\end{align}
Using relations \eqref{eqn:relations} in \eqref{eqn:proof-main-thm-h'}, we get
\begin{equation}\label{eqn:proof-main-thm-h'-sn}
\begin{split}
h'(z)=&\frac{(A-B)s_n(v_{\lambda}(A,B,z),z)^{\frac{1}{\lambda}-1}}{(1+Az)^2}
            \bigg[
                s_n\left(v_{\lambda}(A,B,z)-\frac{(1+Az)(1+Bz)}{(A-B)\lambda}v'_{\lambda}(A,B,z),z\right)\\
                &\qquad \qquad\qquad+\frac{(n+1)}{\lambda (A-B)}\sum_{k=0}^{n+1}\frac{[\lambda]_k}{k!}
                        \frac{(\lambda)_{n-k+1}}{(n-k+1)!}A^k (-B)^{n-k+1}z^n\\
               &\qquad \qquad\qquad\qquad  -\frac{nAB}{\lambda(A-B)}\sum_{k=0}^{n}\frac{[\lambda]_k}{k!}
                        \frac{(\lambda)_{n-k}}{(n-k)!}A^k (-B)^{n-k}z^{n+1}\bigg].
\end{split}
\end{equation}
Substituting \eqref{eqn:proof-main-thm-zero} in \eqref{eqn:proof-main-thm-h'-sn} and using
definition of $a_n$, the following form of $h'(z)$ can be obtained.
\begin{align*}
h'(z)=&\frac{z^ns_n(v_{\lambda}(A,B,z),z)^{\frac{1}{\lambda}-1}}{\lambda(1+Az)^2}
            \left( (n+1)a_{n+1}-ABzna_n\right)\\
            =&\frac{z^ns_n(v_{\lambda}(A,B,z),z)^{\frac{1}{\lambda}-1}}{\lambda}
            \left( (n+1)a_{n+1}-ABzna_n\right)(1+Az)^{-2}\\
            =&\frac{z^ns_n(v_{\lambda}(A,B,z),z)^{\frac{1}{\lambda}-1}}{\lambda}
            \left( (n+1)a_{n+1}-ABzna_n\right)(1-2Az+3A^2z^2-4A^3z^3+\cdots)\\
            =&\frac{z^ns_n(v_{\lambda}(A,B,z),z)^{\frac{1}{\lambda}-1}}{\lambda}
            \left( (n+1)a_{n+1}+\sum_{m=1}^{\infty}(m+1)(n+1)a_{n+1}+mnBa_n(-A)^mz^m\right).
\end{align*}
Since $A \in (-1,0]$, we have $-A \geq 0$. Therefore in view of Lemma \ref{lemma:1},
we obtain $a_n  > 0$ for all $n\in \mathbb{N}$. Further, from Lemma \ref{lemma:2},
we obtain $(m+1)(n+1)a_{n+1}+Bmna_n  > 0$ for all $n,m \in \mathbb{N}$.
Thus
\begin{align*}
(n+1)a_{n+1}+\sum_{m=0}^{\infty}[(m+1)(n+1)a_{n+1}+Bmna_n](-A)^mz^m
\end{align*}
represents a series of positive Taylor's coefficients. Since $a_n > 0$ for all $n\in \mathbb{N}$,
the function $v_{\lambda}(A,B,z)$ has a series representation with positive Taylor coefficients.
Hence,
$$
|s_n(v_{\lambda}(A,B,z),z)|\leq s_n(v_{\lambda}(A,B,z),|z|)$$
holds and consequently
\begin{align}\label{eqn:proof-main-thm-mod-h'}
|h'(z)|\leq h'(|z|),\quad \mbox{ for all $ z\in\mathbb{D}$ holds.}
\end{align}
Since $h(0)=0$ and $h(-B)=1$, using \eqref{eqn:proof-main-thm-mod-h'} we obtain
\begin{align*}
|h(z)|=\left|\int_{0}^{z}h'(t)dt\right|\leq \int_{0}^{-B}h'\left(\frac{-tz}{B}\right)dt\leq \int_{0}^{-B}h'(t)dt=1,\quad  \ z \in \mathbb{D}.
\end{align*}
Therefore,
\begin{align*}
\left|\frac{(1+Bz)(s_n(v_(A,B,z),z))^{\frac{1}{\lambda}}}{(1+Az)}-1\right|<1,\quad  \ z \in \mathbb{D}.
\end{align*}
which implies that
\begin{align*}
\frac{s_n(v_{\lambda}(A,B,z),z)}{v_{\lambda}(A,B,z)} \prec \frac{1}{v_{\lambda}(0,B,z)}.
\end{align*}
Therefore, $v_{\lambda}(A,B,z)$ is $n$-stable with respect to $v_{\lambda}(0,B,z)$ for all
$n \in \mathbb{N}$. Hence $v_{\lambda}(A,B,z)$
is stable with respect to $v_{\lambda}(0,B,z)$ for all
$0< \lambda \leq 1$ and $-1\leq B < A \leq 0$.
\end{proof}

For the proof of Corollary \ref{cor:thm-main}, we need the following proposition which follows the same
procedure as given in \cite{koumandos-ruscheweyh-2007-conjecture-JAT}.
\begin{prop}
Let $\alpha,\beta>0$ and $B\in[-1,0)$. If $F\prec (1+Bz)^{\alpha}$ and $G\prec (1+Bz)^{\beta}$ then
$FG\prec (1+Bz)^{\alpha+\beta}$ for $z\in\mathbb{D}$.
\begin{proof}
The function $\log(1+Bz)$ is convex univalent for $z\in\mathbb{D}$ and $B\in[-1,0)$.
Our claim follows from
\begin{align*}
\frac{1}{\alpha+\beta}\log(F(z)G(z))&=\frac{\alpha}{\alpha+\beta}\log(1+Bu(z))
                                                                +\frac{\beta}{\alpha+\beta}\log(1+Bv(z))\\
                                                                &\prec \log(1+Bz),
\end{align*}
where $u,v$ are analytic functions such that $|u(z)|\leq |z|$ and $|v(z)|\leq |z|$ for $z\in\mathbb{D}$.
\end{proof}
\end{prop}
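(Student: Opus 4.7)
The plan is to convert the multiplicative subordination $FG\prec (1+Bz)^{\alpha+\beta}$ into an additive one by taking logarithms, and then exploit convexity of the image. The crucial fact is that for $B\in[-1,0)$, the function $\ell(z):=\log(1+Bz)$ is convex univalent in $\mathbb{D}$; a direct computation yields $1+z\ell''(z)/\ell'(z) = 1/(1+Bz)$, whose real part is positive on $\mathbb{D}$, so $\ell$ satisfies the standard analytic criterion for convex univalence, and $\ell(\mathbb{D})$ is a convex domain.

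With that in hand, I would use the definition of subordination to extract Schwarz functions $u,v$ (analytic with $u(0)=v(0)=0$ and $|u(z)|,|v(z)|\le |z|$) such that $F(z) = (1+Bu(z))^{\alpha}$ and $G(z) = (1+Bv(z))^{\beta}$; the principal branches are well defined because $|Bu(z)|<1$ and $|Bv(z)|<1$. Taking logarithms and dividing by $\alpha+\beta$ gives
\[
\frac{1}{\alpha+\beta}\log(F(z)G(z)) \;=\; \frac{\alpha}{\alpha+\beta}\,\ell(u(z)) + \frac{\beta}{\alpha+\beta}\,\ell(v(z)).
\]
Both $\ell\circ u$ and $\ell\circ v$ are subordinate to $\ell$, so each takes values in the convex set $\ell(\mathbb{D})$; hence so does the convex combination on the right. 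This yields $\frac{1}{\alpha+\beta}\log(FG) \prec \ell$, i.e., there is a Schwarz function $w$ with $\log(F(z)G(z)) = (\alpha+\beta)\log(1+Bw(z))$, and exponentiating gives $F(z)G(z) = (1+Bw(z))^{\alpha+\beta}\prec (1+Bz)^{\alpha+\beta}$.

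The main obstacle is the subordination-under-convex-combinations step: one needs the (standard but nontrivial) fact that if $\phi$ is convex univalent and $\phi_1,\phi_2\prec \phi$ with $\phi_1(0)=\phi_2(0)=\phi(0)$, then $t\phi_1+(1-t)\phi_2\prec \phi$ for every $t\in[0,1]$. This reduces to observing that $\phi_1(\mathbb{D})\cup\phi_2(\mathbb{D})\subseteq \phi(\mathbb{D})$ and the latter is convex, so pointwise convex combinations remain in $\phi(\mathbb{D})$. A minor technicality is the consistent branch selection for $\log$ and for the fractional powers, resolved by staying on the principal branch in a neighborhood of $1$. No earlier result from the paper is invoked; the argument is self-contained once the convex univalence of $\log(1+Bz)$ is checked.
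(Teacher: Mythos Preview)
Your proof is correct and follows essentially the same route as the paper: both arguments pass to logarithms, invoke the convex univalence of $\log(1+Bz)$, and conclude via the convex-combination property of subordination to a convex univalent function. You have simply supplied more detail (the analytic criterion for convexity, the justification of the convex-combination step, and the branch issues) where the paper is terse.
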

Now we are ready to give proof of Corollary \ref{cor:thm-main}.
\begin{proof}[Proof of Corollory \ref{cor:thm-main}]
For $0<\mu\leq \lambda\leq 1$ we have,
\begin{align*}
&\frac{1}{\lambda} \log\left((1+Bz)^{\lambda}s_n(v_{\mu}(0,B,z),z)\right)\\
&=\frac{1}{\lambda} \log\left[(1+Bz)^{\lambda-\mu}(1+Bz)^{\mu}s_n(v_{\mu}(0,B,z),z)\right]\\
&=\frac{1}{\lambda} \log(1+Bz)^{\lambda-\mu}+\frac{1}{\lambda} \log\left[(1+Bz)^{\mu}s_n(v_{\mu}(0,B,z),z)\right]\\
&=\frac{1}{\lambda} \log(1+Bu(z))^{\lambda-\mu}+\frac{1}{\lambda} \log(1+Bw(z))^{\mu}\\
&\prec (1+Bz)^{\lambda}
\end{align*}
for $|u(z)|\leq |z|$ and $|w(z)|\leq |z|$. Therefore, $(1+Bz)^{\lambda}s_n(v_{\mu}(0,B,z),z)\prec (1+Bz)^{\lambda}$
holds for all $z\in\mathbb{D}$ and $0<\mu\leq \lambda\leq 1$.
\end{proof}

Now we prove that $v_{\lambda}(A,B,z)$ is not stable with respect to
itself for $\lambda \in(0,1]$ and $ -1 \leq B < A \leq 0$.

\begin{proof}[Proof of Theorem \ref{thm:not-janowshi-stable-v}]
For $-1\leq B<A\leq 0$, to prove that $v_{\lambda}(A,B,z)$ is stable with respect to itself, we need to show that
\begin{align}\label{eqn:thm-not-stable-stat}
\frac{s_n(v_{\lambda}(A,B,z),z)}{v_{\lambda}(A,B,z)}\prec \frac{1}{v_{\lambda}(A,B,z)}, \quad z\in\mathbb{D}.
\end{align}
Equivalently $G(z)\prec H(z)$ where
\begin{align*}
G(z):= \frac{(1+Bz)s_n(v_{\lambda}(A,B,z),z)^{\frac{1}{\lambda}}}{1+Az}
\quad \mbox{and} \quad
H(z):=\frac{1+Bz}{1+Az}
\end{align*}
Since $G(z)$ and $H(z)$ are analytic in $\mathbb{D}$ for $ -1 \leq B < A \leq 0$
and $H(z)$ is univalent in $\mathbb{D}$. In the point of view of the subordination,
we have $G(z) \prec H(z)$ if and only if $G(0)=H(0)$ and $G(\mathbb{D}) \subseteq
H(\mathbb{D})$ and $G = H\circ \omega_1$, where $\omega_1\in\Omega $ analytic in $\mathbb{D}$
satisfying $\omega_1(0)=0$ and $|\omega_1(z)| < 1$.

In view of the Schwartz Lemma, we have $|\omega_1(z)|\leq |z|$ for $z \in \mathbb{D}$
and $|\omega_1'(0)|\leq 1$. If $G \prec H$, it follows that  $|G'(0)| \leq |H'(0)|$ and
$G(|z| \leq r) \subseteq H(|z|\leq r), 0\leq r <1$.

Let $\displaystyle \omega = H(z)=\frac{1+Bz}{1+Az}$, then
$\displaystyle z=\frac{1+B\omega}{1+A\omega}$. Therefore,
the image of $|z|\leq r$ under $H(z)$ is $\left|\dfrac{1+B\omega}{1+A\omega}\right|\leq r$ which
 after simplification is equivalent to $|w-C(r,A,B)|\leq R(r,A,B)$ where
\begin{align*}
  C(r,A,B):= \frac{r^2A-B}{B^2-r^2A^2} \quad \mbox{and} \quad  R(r,A,B):= \frac{r(A-B)}{B^2-r^2A^2}
\end{align*}

To show that $G\nprec H$, it is enough to show that $G(|z|\leq r) \nsubseteq H(|z|\leq r)$.
To prove that $G(|z|\leq r) \nsubseteq H(|z|\leq r)$, it is enough to choose a point
$z_0$ with $|z_0|\leq r_0$ such that $G(z_0)$ does not
lie in the disk $|\omega-C(r,A,B)|\leq R(r,A,B)$ for some $-1 \leq B < A \leq 0$.

\begin{figure}[h]
\includegraphics[width=\linewidth]{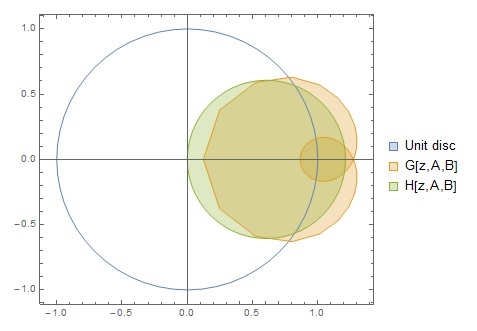}
\caption{$G(z_0,A,B)\nprec H(z_0,A,B)$ for $z_0=0.915282-0.357037 i$, $A =-0.679, B=-0.97,$ and $\lambda =0.3$.}
\label{fig:1}
\end{figure}
Choose $z_0 = 0.915282-0.357037 i$, $r_0 =0.98$, $A =-0.679, B=-0.97,$  $\lambda =0.3$
and $n = 1$. Then $G(z_0)=0.8697+0.5845 i$, $C(r_0,A,B) =0.634444 $ and
$R(r_0,A,B) =0.576521$. Clearly $G(z_0)$ does not lie in the disk
$|\omega-C(r_0,A,B)|\leq R(r_0,A,B) $. Therefore $G\nprec H$ i.e.,
\eqref{eqn:thm-not-stable-stat} does not hold. The graphical illustration of these values is
also given here in Figure \ref{fig:1}. Hence $v_{\lambda}(A,B,z)$ is not stable with respect to itself.
\end{proof}


\begin{thebibliography}{9999}

\bibitem{allu-stable-CMFT-2018}
        S. Chakraborty, A. Vasudevarao,
        {\it On stable Functions},
        { Comput. Methods Funct. Theory}
        {\bf 18} (2018) 677-688.

\bibitem{duren-1983-book}
        { P.L. Duren},
        {\it Univalent Functions},
        Springer--Verlag, Berlin, 1983.

\bibitem{janowski-1973-ann-polon}
        {W. Janowski},
        Some extremal problems for certain families of analytic functions.
        I, Ann. Polon. Math.
        {\bf 28} (1973), 297--326.

\bibitem{saiful-2012-stable-results-in-math}
        { S. R. Mondal\ and\ A. Swaminathan},
        {\it Stable functions and extension of Vietoris' theorem},
        Results Math. {\bf 62} (2012), no.~1-2, 33--51.

\bibitem{ruscheweyh-1982-book}
        { S. Ruscheweyh},
        {\it Convolutions in geometric function theory},
        S\'eminaire de Math\'ematiques Sup\'erieures, 83,
        Presses Univ. Montr\'eal, Montreal, QC, 1982.

\bibitem{ruscheweyh-salinas-2000-AnnMarie}
        { S. Ruscheweyh\ and\ L. Salinas},
        {\it On starlike functions of order $\lambda\in [{1/2},1)$},
         Ann. Univ. Mariae Curie-Sk\l odowska Sect. A {\bf 54} (2000), 117--123.

\bibitem{ruscheweyh-salinas-2004-stable-JMAA}
        { S. Ruscheweyh\ and\ L. Salinas},
        {\it Stable functions and Vietoris' theorem},
        J. Math. Anal. Appl. {\bf 291} (2004), no.~2, 596--604.

\bibitem{koumandos-ruscheweyh-2007-conjecture-JAT}
        {\sc  S. Koumandos\ and\ S. Ruscheweyh},
        {\it On a conjecture for trigonometric sums and starlike functions},
        J. Approx. Theory {\bf 149} (2007), no.~1, 42--58.

\bibitem{sangal-swami-stable-MIA}
        { P. Sangal\ and\ A. Swaminathan},
        {\it On generalized Ces\`aro Stable functions},
        Mathematical Inequalities and Applications, {\bf22} (1), (2019). 227–247.

\bibitem{Mathematica}
        {Wolfram Research. Inc. Mathematica. Version 10.0}
        
\bibitem{vietoris-1958}
        { L.Vietoris},
        {\it \"Uber das Vorzeichen gewisser trignometrishcher Summen},
         Sitzungsber, Oest. Akad. Wiss. {\bf167}, 1958,125--135.

\end{thebibliography}
\end{document}